\renewcommand\title[1]{\gdef\@title{\reset@font\Large\bfseries #1}}
\renewcommand\section{\@startsection {section}{1}{\z@}%
                                   {-3.5ex \@plus -1ex \@minus -.2ex}%
                                   {2.3ex \@plus.2ex}%
                                   {\normalfont\large\bfseries}}
\renewcommand\subsection{\@startsection{subsection}{2}{\z@}%
                                     {-3ex\@plus -1ex \@minus -.2ex}%
                                     {1.5ex \@plus .2ex}%
                                     {\normalfont\normalsize\bfseries}}
\renewcommand\subsubsection{\@startsection{subsubsection}{3}{\z@}%
                                     {-2.5ex\@plus -1ex \@minus -.2ex}%
                                     {1.5ex \@plus .2ex}%
                                     {\normalfont\normalsize\bfseries}}
\def\@runningauthor{}\newcommand{\runningauthor}[1]{\def\runningauthor{#1}}
\def\@runningtitle{}\newcommand{\runningtitle}[1]{\def\runningtitle{#1}}
\renewcommand{\ps@plain}{%
\renewcommand{\@evenhead}{\footnotesize\scshape \hfill\runningauthor\hfill}
\renewcommand{\@oddhead}{\footnotesize\scshape \hfill\runningtitle\hfill}}
\g@addto@macro\bfseries{\boldmath}
\theoremstyle{plain}
\newtheorem{theorem}{Theorem}
\newtheorem{lemma}[theorem]{Lemma}
\newtheorem{corollary}[theorem]{Corollary}
\newtheorem{proposition}[theorem]{Proposition}
\theoremstyle{definition}
\theoremstyle{remark}
\newtheorem{remark}[theorem]{Remark}
\def \F {{\mathbb F}}
\def \Tr {{\rm Tr_n}}
\def \T {{\rm Tr}}
\def \V {\mathbb{V}}
\title{On the normality of $p$-ary bent functions}
\runningtitle{Normality for $p$-ary bent functions}
\author{Wilfried Meidl\thanks{First Author is supported by the Austrian Science Fund (FWF) Project no.\ M1767-N26.}\\
\small Johann Radon Institute for Computational and Applied Mathematics \\[-0.8ex]
\small Austrian Academy of Sciences, Linz, Austria, and \\ 
\small Otto-von-Guericke University Magdeburg, \\ [-0.8ex]
\small Universit\"atsplatz 2, 39106 Magdeburg, Germany \\[-0.8ex]
\small\tt meidlwilfried@gmail.com\\
\and
\'Isabel Pirsic \thanks{Second Author is supported by the Austrian Science Fund (FWF) Project no.\ P27351-N16}\\
\small Institute of Financial Mathematics and Applied Number Theory\\[-0.8ex]
\small Johannes Kepler Universit\"at\\[-0.8ex]
\small Linz, Austria \\
\small\tt isa.pirsic@gmail.com
}
\runningauthor{W.\ Meidl, \'I.\ Pirsic}
\date{}
\begin{document}

\maketitle

\thispagestyle{empty}

\begin{abstract}
Depending on the parity of $n$ and the regularity of a bent function $f$ from $\F_p^n$ to $\F_p$,
$f$ can be affine on a subspace of dimension at most $n/2$, $(n-1)/2$ or $n/2-1$. We point out that
many $p$-ary bent functions take on this bound, and it seems not easy to find examples for which one 
can show a different behaviour. This resembles the situation for Boolean bent functions of which many 
are (weakly) $n/2$-normal, i.e. affine on a $n/2$-dimensional subspace. However applying an algorithm 
by Canteaut et.al., some Boolean bent functions were shown to be not $n/2$-normal. We develop an algorithm 
for testing normality for functions from $\F_p^n$ to $\F_p$. Applying the algorithm, for some bent functions 
in small dimension we show that they do not take on the bound on normality. Applying direct sum of functions
this yields bent functions with this property in infinitely many dimensions.
\end{abstract}


\section{Introduction}

Let $p$ be a prime, and let $f$ be a function from an $n$-dimensional vector space $\V_n$ over $\F_p$ to $\F_p$.
The {\it Walsh transform} of $f$ is the complex valued function
\[ \widehat{f}(u) = \sum_{x\in \V_n}\epsilon_p^{f(x)-\langle u,x\rangle}, \quad \epsilon_p = e^{2\pi i/p},  \]
where $\langle u,x\rangle$ is a (nondegenerate) inner product in $\V_n$. The classical frameworks are $\V_n = \F_p^n$,
in which case we take the conventional dot product as inner product, and $\V_n = \F_{p^n}$, for which the standard inner 
product is $\langle u,x\rangle = \Tr(ux)$, where $\Tr(z)$ is the absolute trace of $z$ in $\F_{p^n}$.

The function $f$ is called a {\it bent function} if $|\widehat{f}(b)| = p^{n/2}$ for all $b\in \V_n$.
For Boolean bent functions we have $\widehat{f}(b) = (-1)^{f^*(b)}2^{n/2}$ for a Boolean function $f^*$, 
called the dual of $f$. When $p$ is odd, then a bent function $f$ satisfies (cf. \cite{hk})
\begin{equation}
\label{(2)} 
\widehat{f}(b) =
\left\{\begin{array}{r@{\quad:\quad}l}
\pm \epsilon_p^{f^*(b)}p^{n/2} & p^n \equiv 1\bmod 4; \\
\pm i\epsilon_p^{f^*(b)}p^{n/2} & p^n \equiv 3\bmod 4,
\end{array}\right.
\end{equation}
for a function $f^*$ from $\V_n$ to $\F_p$. Accordingly $f$ is called 
{\it regular} if $p^{-n/2}\widehat{f}(b) =\epsilon_p^{f^*(b)}$ for all $b \in \V_n$,
which for a Boolean bent function always holds.
If $p^{-n/2}\widehat{f}(b) =\zeta\ \epsilon_p^{f^*(b)}$ for some $\zeta\in\{\pm 1,\pm i\}$, independent from $b$,
we call $f$ {\it weakly regular}, otherwise $f$ is called {\it non-weakly regular}.
Note that regular implies weakly regular. Weakly regular bent functions always come in pairs, since the dual is bent as well.
This does in general not hold for non-weakly regular bent functions, see \cite{cmp0,cmp1}.
Note that Boolean bent functions only exist for even $n$, which is different when $p$ is odd, where bent functions exist in
even and in odd dimension.

Bent functions are interesting objects due to applications in cryptography and coding, and due to rich connections to objects
in combinatorics and geometry. In particular they define relative difference sets in the elementary abelian $p$-group.
Many constructions and infinite classes of bent functions are known, hence research focuses on the nature and properties of bent functions,
rather than on discovering new formulas for bent functions. In this article we investigate normality for $p$-ary bent functions,
which then also describes a feature of the corresponding relative difference set. 

A function $f:\V_n\rightarrow \F_p$ is called {\it $k$-normal} if there exists a $k$-dimensional affine subspace of $\V_n$ restricted to which
$f$ is constant. If $f$ is affine on a $k$-dimensional affine subspace of $\V_n$, then $f$ is called {\it weakly-$k$-normal}.
When $n$ is even and $k=n/2$, then $f$ is called {\it (weakly)-normal}.

Many classical constructions of Boolean bent functions like Maiorana-McFarland and $PS^+$ yield normal functions. This is very different for 
random Boolean functions, which are not likely to be constant (affine) on an affine subspace with ``large'' dimension \cite{c04}.
The question if there exist non-(weakly)-normal Boolean bent functions was open for several years. In \cite{cddl} it was shown that the Kasami 
bent function in dimension $14$ is non-weakly-normal. Non-weakly-normal bent functions in dimension $10$ (and $12$) were presented in \cite{lg}.
By \cite[Lemma 25]{cddl} this guarantees the existence of non-weakly-normal Boolean bent functions in (even) dimension $n \ge 10$.

$k$-normality may also be of cryptographic significance. As pointed out in \cite{c04}, $k$-normality is a quite natural complexity criterion,
since any affine function is constant on an affine hyperplane. Moreover there is a relation between normality and nonlinearity for Boolean functions,
see \cite[Proposition 2]{c04}. As also mentioned in \cite{c04}, the $k$-normality was not yet related to explicit attacks on ciphers, however the 
situation was the same for nonlinearity when it was introduced. In fact, meanwhile the attack on the stream cipher Grain-128 in \cite{mgpi} is based on the
$5$-normality of the $9$-variable filter function (which can be seen as a modification of the standard quadratic bent function in $8$ variables),
used in the sequence generation.

The situation for bent functions from $\V_n$ to $\F_p$, $p$ odd,
is somewhat different from the Boolean case. 
In \cite{cmp} it is pointed out that a weakly regular but not regular bent function in even dimension $n$ cannot be normal. However some results 
indicate that also for odd $p$, bent functions exhibit a typical normality behaviour. It may not be easy find bent functions for which one can 
prove a different behaviour.

In this paper, we first present a $p$-ary equivalent of a result of Carlet in \cite{c04} showing that - as one would expect - an arbitrary 
$p$-ary function is with high probability not (weakly)-$k$-normal for any not very small value of $k$. We then show the $p$-ary equivalent
of a relation between nonlinearity and normality for Boolean functions, \cite{c04,c}.
We summarize some known results on normality for $p$-ary bent functions, which indicate that many have a "typical"
behaviour with respect to normality, similar as it was observed in the Boolean case: Many $p$-ary bent functions are 
$k$-normal, where $k$ is as large as it is theoretically possible for a bent function.

In Section \ref{test} we present an algorithm for testing (weak)-$k$-normality for $p$-ary functions. Our algorithm is not
a straightforward generalization of the algorithm in \cite{cddl}, which was used to find non-weakly-normal Boolean bent 
functions in dimension $14$ \cite{cddl}, and $10$ and $12$ \cite{lg}. Applying this algorithm we find the first examples
of $p$-ary bent functions (in small dimensions) which do not possess $k$-normality with maximal possible $k$. Generalizing 
Lemma 25 of \cite{cddl} we then can obtain bent functions with this property in every larger dimension of the same parity.

\section{Normality results}

One target in this paper is to pave the way for a systematic analysis of the behaviour of $p$-ary bent functions with respect to normality.
We hence start with showing some $p$-ary equivalents of results on the normality behaviour of Boolean (bent) functions. 
Our first proposition, is the $p$-ary version of Theorem 3 and Proposition 1 in \cite{c04}.
The proof resembles the proof in \cite{c04}.
\begin{proposition}
\label{hnn}
Let $k_n$ be a sequence of integers 
such that $\lim_{n\rightarrow\infty}\frac{p^{k_n}}{nk_n} = \infty$.
The density in the set of functions from $\V_n$ to $\F_p$ of the functions which are weakly-$k_n$-normal tends to $0$
if $n$ tends to infinity. \\
Let $l_n$ be a sequence of positive integers such that $l_n/\sqrt{n}$ tends to infinity if $n$ tends to infinity.
The density of the set of weakly $l_n$-normal functions from $\V_n$ to $\F_p$ of degree at most $3$ in the set of all functions 
of degree at most $3$, tends to $0$ if $n$ tends to infinity.
\end{proposition}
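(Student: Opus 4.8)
The plan is to bound, for each relevant dimension, the number of (weakly) normal functions by a union bound over all affine subspaces of that dimension, and then compare with the total number of functions. Throughout let $N_k$ denote the number of $k$-dimensional affine subspaces of $\V_n$; since there are $\binom{n}{k}_p$ linear subspaces of dimension $k$, each with $p^{n-k}$ cosets, $N_k=\binom{n}{k}_p\,p^{n-k}$. Using the standard estimate $\binom{n}{k}_p\le c_p\,p^{k(n-k)}$ with $c_p=\prod_{j\ge 1}(1-p^{-j})^{-1}<4$, one gets $\log_p N_k\le k(n-k)+(n-k)+O(1)=O(kn)$. The only other ingredient is, for a fixed $k$-dimensional affine subspace $V$, a count of the functions that are affine on $V$ and (in the degree-restricted setting) still have degree at most $3$.

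For the first statement I would fix an affine subspace $V$ of dimension $k=k_n$. There are exactly $p^{k+1}$ affine functions on the $p^k$ points of $V$, and for each of them the fraction of functions $f\colon\V_n\to\F_p$ agreeing with it on $V$ is $p^{-p^k}$ (the $p^k$ values on $V$ are prescribed, the remaining $p^n-p^k$ are free). A union bound over $V$ and over the affine function on $V$ gives a density of weakly-$k_n$-normal functions bounded by $D_{k_n}\le N_{k_n}\,p^{k_n+1}\,p^{-p^{k_n}}$, whence $\log_p D_{k_n}\le \log_p N_{k_n}+(k_n+1)-p^{k_n}=O(k_nn)-p^{k_n}$. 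Since $\lim_{n\to\infty}p^{k_n}/(nk_n)=\infty$ forces $p^{k_n}$ to grow faster than the $O(k_nn)$ term, $\log_p D_{k_n}\to-\infty$ and hence $D_{k_n}\to0$. The hypothesis on $k_n$ is exactly what is needed to make the exponential term $p^{k_n}$ dominate the polynomial cost $O(k_nn)$ of the union bound.

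For the second statement the total count becomes $p^{M_n}$, where $M_n$ is the number of monomials of degree at most $3$ in $n$ variables over $\F_p$, and I must count how many degree-$\le 3$ functions are affine on a fixed $l$-dimensional affine subspace $V$ ($l=l_n$). The plan is to exhibit many independent linear constraints that such a function must satisfy. Choosing affine coordinate functions $\lambda_1,\dots,\lambda_l$ on $V$ and extending each to an affine function $\tilde\lambda_i$ on $\V_n$, consider the linear map sending a degree-$\le 3$ function $f$ to the $\binom{l}{3}$ coefficients of the squarefree cubic monomials $\lambda_i\lambda_j\lambda_k$ ($i<j<k$) in the reduced representation of $f|_V$. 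This map is surjective, since $\sum_{i<j<k}c_{ijk}\,\tilde\lambda_i\tilde\lambda_j\tilde\lambda_k$ is a degree-$\le 3$ function whose restriction to $V$ has precisely these coefficients (squarefree cubic monomials need no reduction for any $p$, so distinct triples do not interfere), and it vanishes on every function affine on $V$. Consequently the fraction of degree-$\le 3$ functions affine on $V$ is at most $p^{-\binom{l}{3}}$, and a union bound yields $D\le N_{l_n}\,p^{-\binom{l_n}{3}}$, i.e.\ $\log_p D\le O(l_nn)-\binom{l_n}{3}$. Because $\binom{l_n}{3}\sim l_n^3/6$ while the union-bound cost is $O(l_nn)$, their ratio is $O(n/l_n^2)\to0$, so $\log_p D\to-\infty$ and $D\to0$.

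The step I expect to be the crux is this degree-constrained count: one must produce enough independent constraints to beat the $O(ln)$ subspaces, and the quadratic part alone supplies only $\Theta(l^2)$ constraints, which would force the useless threshold $l\gg n$. It is precisely the $\Theta(l^3)$ cubic constraints that bring the threshold down to $l\gg\sqrt{n}$, which is why degree $3$ and the $\sqrt{n}$ scaling appear together in the statement. A minor technical point, cleanest for $p\ge 5$ and handled uniformly by restricting attention to squarefree cubic monomials, is to check that these cubic coefficients are genuinely free and unaffected by the field relations $x_i^p=x_i$.
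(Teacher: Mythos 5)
Your proposal is correct and takes essentially the same route as the paper: a union bound over all affine subspaces of the given dimension (counted via the Gaussian binomial times $p^{n-k}$) against the count of functions affine on a fixed subspace, with exactly the same exponent comparisons --- $O(k_n n)$ versus $p^{k_n}$ in the first part, and $O(l_n n)$ versus $\Theta(l_n^3)$ in the second, where the cubic constraints are indeed what makes the $\sqrt{n}$ threshold work. Your only deviations are cosmetic refinements of the second part: you handle an arbitrary subspace $V$ directly via a surjective linear map onto the $\binom{l_n}{3}$ squarefree cubic coefficients of $f|_V$ (explicitly minding reduction modulo $x_i^p-x_i$), whereas the paper normalizes to a coordinate subspace and counts ANF monomials, also charging the $\binom{l_n}{2}$ quadratic constraints, which are asymptotically irrelevant.
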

{\it Proof.} For the proof we may identify $\V_n$ with $\F_p^n$.
The number of linear subspaces of $\F_p^n$ of dimension $k_n$ is
\[ \left[ \begin{array}{c} n \\ k_n \end{array} \right] = 
\frac{(p^n-1)(p^n-p)(p^n-p^2)\cdots (p^n-p^{k_n-1})}{(p^{k_n}-1)(p^{k_n}-p)(p^{k_n}-p^2)\cdots (p^{k_n}-p^{k_n-1})}, \]
hence the number of $k_n$-dimensional affine subspaces of $\F_p^n$ is
\[ \lambda_n = p^{n-k_n}\left[ \begin{array}{c} n \\ k_n \end{array} \right]. \]

Let $\mu_n$ be the number of functions from $\F_p^n$ to $\F_p$ which are affine on a fixed $k_n$-dimensional affine subspace
$A$ (which does not depend on the choice of $A$). To determine $\mu_n$, we choose $A=\F_2^{k_n} \times \{(0,\ldots,0)\}$.
Observing that the restriction of a $p$-ary function to $A$ is affine if and only if its ANF contains no monomial of degree at
least $2$ which only contains variables in $\{x_1,x_2,\ldots,x_{k_n}\}$. The number of such functions is 
$p^{p^n-p^{k_n}+k_n+1}$, hence the number $\omega_{k_n}$ of weakly-$k_n$-normal functions is at most 
\[ \lambda_n p^{p^n-p^{k_n}+k_n+1} = p^{n-k_n} \left[ \begin{array}{c} n \\ k_n \end{array} \right] p^{p^n-p^{k_n}+k_n+1}. \]
With 
\[ \left[ \begin{array}{c} n \\ k_n \end{array} \right] <  \frac{p^{nk_n-k_n^2+k_n}}{(p-1)^{k_n}} \le p^{nk_n-k_n^2+k_n-k_n\log_p2}, \]
we obtain that
\begin{eqnarray*} 
\omega_{k_n} & \le & \lambda_n p^{p^n-p^{k_n}+k_n+1} < p^{n-k_n}p^{nk_n-k_n^2+k_n-k_n{\rm log}_p2}
p^{p^n-p^{k_n}+k_n+1} \\
& = &  p^{p^n}p^{n(k_n+1)-k_n^2-k_n{\rm log}_p2+k_n+1-p^{k_n}} < p^{p^n}p^{n(k_n+1)-p^{k_n}}. 
\end{eqnarray*}
Since $\frac{p^{k_n}}{nk_n}$ tends to infinity when $n$ tends to $\infty$, the exponent $n(k_n+1)-p^{k_n}$ tends to 
$-\infty$. As a consequence, $\lim_{n\rightarrow\infty}\frac{\omega_{k_n}}{p^{p^n}} = 0$. 

Let $\nu_n$ be the number of functions from $\F_p^n$ to $\F_p$ of degree at most $3$ which are affine on $A=\F_2^{l_n} \times \{(0,\ldots,0)\}$.
Similarly as above we see that $\nu_n = p^{1+n+{n\choose 2}+{n\choose 3}-{l_n\choose 2}-{l_n\choose 3}}$, and the number of 
weakly-$l_n$-normal functions of degree at most $3$ is at most $p^{n(l_n+1)-l_n^2+1+n+{n\choose 2}+{n\choose 3}-{l_n\choose 2}-{l_n\choose 3}}$.
The density of this set in the set of $p$-ary functions of degree at most $3$ is therefore upper bounded by
\[ p^{n(l_n+1)-l_n^2-{l_n\choose 
2}-{l_n\choose 3}}, \]
which tends to $0$ if $n$ tends to infinity. \hfill$\Box$\\[.5em]
We remark that the proof of Proposition \ref{hnn} also shows that the existence of a not weakly $k$-normal function 
from $\V_n$ to $\F_p$ is guaranteed whenever
$\frac{p^{n(k+1)-k^2+k+1-p^k}}{(p-1)^k} < 1$.
For instance, there are not (weakly) normal functions for $p=3$ and $n=6$, and for $p=5$ and $n=4$.

For Boolean functions, in \cite{c04,c} relations between normality and Walsh coefficients have been
explored. We next generalize these results to $p$-ary functions.
Some identities for Boolean functions which play a role in the analysis can straightforwardly be generalized to odd $p$,
hence we omit the proof. Let
\begin{itemize}
\item[-] $V$ be a $k$-dimensional subspace of $\V_n$, and let $W$ be a complement of $V$ in $\V_n$, 
\item[-] $f_a$ be the restriction of $f:\V_n\rightarrow\F_p$ to $a+W$, i.e. $f_a(x) = f(a+x)$, $x\in W$,
\item[-] $D_bf(x) = f(x)-f(x+b)$ the derivative of $f$ in direction $b$.
\end{itemize}
Then
\begin{itemize}
\item[(a)] $\widehat{D_bf}(0) = \sum_{a\in V}\widehat{D_bf_a}(0)$ for any $b\in W$,
\item[(b)] $\sum_{u\in V}\widehat{f}(u+a)\overline{\widehat{f}(u+a)} = 
p^k\sum_{b\in V^\perp}\epsilon_p^{\langle a,b\rangle}\widehat{D_bf}(0)$ (Lemma V2 in \cite{cccf}),
\item[(c)] $\sum_{a\in V}\sum_{b\in W}\widehat{D_bf_a}(0) = \sum_{a\in V}\widehat{f_a}(0)\overline{\widehat{f_a}(0)}$.
\end{itemize}
The following lemma is the $p$-ary analog of Theorem V1 in \cite{cccf} (Equation (4) in \cite{c}).
\begin{lemma}
\label{ThV1}
Let $W$ be a $k$-dimensional subspace of $\V_n$ and let $V$ be a complement of $W$ in $\V_n$. Then
\[ \sum_{u\in W^\perp}\widehat{f}(u)\overline{\widehat{f}(u)} = p^{n-k}\sum_{a\in V}\widehat{f_a}(0)\overline{\widehat{f_a}(0)} 
. \]
\end{lemma}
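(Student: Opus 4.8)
The plan is to obtain the identity by chaining together the three preliminary identities (a), (b), (c) with the orthogonality relation for additive characters.

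I would first apply identity (b) with $a=0$, but with the $k$-dimensional subspace there replaced by $W^\perp$, which has dimension $n-k$. Since $(W^\perp)^\perp = W$ and $\epsilon_p^{\langle 0,b\rangle}=1$, identity (b) then reads
\[ \sum_{u\in W^\perp}\widehat{f}(u)\overline{\widehat{f}(u)} = p^{n-k}\sum_{b\in W}\widehat{D_bf}(0). \]
This already produces the prefactor $p^{n-k}$ and reduces the problem to evaluating the autocorrelation sum $\sum_{b\in W}\widehat{D_bf}(0)$ in terms of the restrictions $f_a$.

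For the second step I would rewrite this sum using (a) and (c). By identity (a), for each $b\in W$ we have $\widehat{D_bf}(0)=\sum_{a\in V}\widehat{D_bf_a}(0)$; summing over $b\in W$ and interchanging the order of summation gives $\sum_{a\in V}\sum_{b\in W}\widehat{D_bf_a}(0)$, which by identity (c) equals $\sum_{a\in V}\widehat{f_a}(0)\overline{\widehat{f_a}(0)}$. Substituting this back into the displayed equation yields the claim.

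The only delicate points are bookkeeping ones, and I expect the matching of conventions to be the main (if modest) obstacle: the preliminary identities are phrased with the $k$-dimensional space in the role of $V$, whereas in the lemma it is $W$ that is $k$-dimensional, so one must apply (b) with the substitution $V\leftrightarrow W^\perp$ and check that the derivative directions $b$ then range exactly over $(W^\perp)^\perp=W$ and that the prefactor becomes $p^{n-k}$ rather than $p^k$. As a sanity check, the same identity has a direct double-counting proof: expanding $\widehat{f}(u)\overline{\widehat{f}(u)}=\sum_{x,y\in\V_n}\epsilon_p^{f(x)-f(y)-\langle u,x-y\rangle}$ and summing over $u\in W^\perp$, the inner character sum $\sum_{u\in W^\perp}\epsilon_p^{-\langle u,x-y\rangle}$ equals $p^{n-k}$ when $x-y\in W$ and $0$ otherwise; grouping the surviving pairs by the coset $a+W$ in which they lie recovers $p^{n-k}\sum_{a\in V}\widehat{f_a}(0)\overline{\widehat{f_a}(0)}$.
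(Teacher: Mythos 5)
Your proof is correct and follows essentially the same route as the paper: the paper's proof likewise applies (b) with $a=0$ and the subspace taken to be $W^\perp$ (so the derivative directions range over $(W^\perp)^\perp=W$ with prefactor $p^{n-k}$), then (a), then interchanges the summations and invokes (c). Your closing double-counting computation is a valid independent verification, but the main argument matches the paper's exactly.
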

{\it Proof.} Applying (b), (a), (c) (in this order) we get $\sum_{u\in W^\perp}\widehat{f}(u)\overline{\widehat{f}(u)} = 
p^{n-k}\sum_{b\in W}\widehat{D_bf}(0) = p^{n-k}\sum_{b\in W}\sum_{a\in V}\widehat{D_bf_a}(0) = p^{n-k}\sum_{a\in V}\sum_{b\in W}\widehat{D_bf_a}(0)
= p^{n-k}\sum_{a\in V}\widehat{f_a}(0)\overline{\widehat{f_a}(0)}$. \hfill$\Box$\\[.5em]
The next lemma is a $p$-ary version of \cite[Corollary V3]{cccf}.
\begin{lemma}
\label{CV3}
With the above notations we have
\[ \sum_{a\in V}|\widehat{f_a}(0)|^2 \le \max_{u\in\V_n}|\widehat{f}(u)|^2. \]
Moreover, 
\[ \max_{v\in V_n}|\widehat{f_a}(v)| \le \max_{u\in\V_n}|\widehat{f}(u)|. \]
\end{lemma}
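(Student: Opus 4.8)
The plan is to obtain the first inequality as an immediate consequence of Lemma \ref{ThV1} together with a trivial count, and then to deduce the second inequality from the first by translating $f$ with a suitable linear function so that a single coefficient $|\widehat{f_a}(v)|$ becomes a coefficient of the form $|\widehat{g_a}(0)|$ already controlled by the first part.

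For the bound $\sum_{a\in V}|\widehat{f_a}(0)|^2 \le \max_{u\in\V_n}|\widehat{f}(u)|^2$, I would start from Lemma \ref{ThV1}, which reads $\sum_{u\in W^\perp}\widehat{f}(u)\overline{\widehat{f}(u)} = p^{n-k}\sum_{a\in V}\widehat{f_a}(0)\overline{\widehat{f_a}(0)}$. Since $W$ is $k$-dimensional, its orthogonal complement $W^\perp$ has dimension $n-k$ and hence exactly $p^{n-k}$ elements, so bounding each summand on the left by the maximum gives $\sum_{u\in W^\perp}|\widehat{f}(u)|^2 \le p^{n-k}\max_{u\in\V_n}|\widehat{f}(u)|^2$. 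Combining this with the identity and dividing by $p^{n-k}$ yields the first claim.

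For $\max_{v}|\widehat{f_a}(v)| \le \max_{u\in\V_n}|\widehat{f}(u)|$, the idea is that $|\widehat{f_a}(v)|^2$ is a single nonnegative term of a sum of the type just estimated, once the character $\langle v,\cdot\rangle$ is absorbed into $f$. Fixing $v$, I would pick any $\tilde v\in\V_n$ whose restriction to $W$ agrees with $v$ and set $g(x)=f(x)-\langle\tilde v,x\rangle$. Then $\widehat{g}(u)=\widehat{f}(\tilde v+u)$, so $\max_{u\in\V_n}|\widehat{g}(u)|=\max_{u\in\V_n}|\widehat{f}(u)|$. A direct computation gives $g_a(x)=f_a(x)-\langle v,x\rangle-\langle\tilde v,a\rangle$ for $x\in W$, whence $\widehat{g_a}(0)=\epsilon_p^{-\langle\tilde v,a\rangle}\widehat{f_a}(v)$ and in particular $|\widehat{g_a}(0)|=|\widehat{f_a}(v)|$. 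Applying the first inequality to $g$ then yields $|\widehat{f_a}(v)|^2=|\widehat{g_a}(0)|^2\le\sum_{a'\in V}|\widehat{g_{a'}}(0)|^2\le\max_{u\in\V_n}|\widehat{g}(u)|^2=\max_{u\in\V_n}|\widehat{f}(u)|^2$, and taking the maximum over $v$ followed by a square root completes the argument.

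I expect the only delicate point to be the bookkeeping between the complement $V$ and the orthogonal complement $W^\perp$, and checking that as $\tilde v$ ranges over $\V_n$ its restriction $\langle\tilde v,\cdot\rangle|_W$ sweeps out every linear functional on $W$, so that $\max_{v}|\widehat{f_a}(v)|$ genuinely corresponds to maximizing over the full Walsh spectrum of the restricted function $f_a$ on $W$. Once that is settled, the remainder is routine manipulation of the characters $\epsilon_p^{\langle\cdot,\cdot\rangle}$.
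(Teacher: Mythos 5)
Your proof is correct and follows essentially the same route as the paper: the first inequality is read off from Lemma \ref{ThV1} using $|W^\perp|=p^{n-k}$, and the second is obtained, exactly as in the paper, by applying the first part to the shifted function $\tilde f(x)=f(x)+\langle v,x\rangle$ (your $g$, up to sign), whose Walsh spectrum is a translate of that of $f$. The delicate point you flag about restrictions of functionals to $W$ is immaterial here, since $v$ already ranges over $\V_n$ and you may simply take $\tilde v=v$.
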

{\it Proof.}
By Lemma \ref{ThV1}, with $|W^\perp| = p^{n-k}$, we have 
$p^{n-k}\sum_{a\in V}|\widehat{f_a}(0)|^2 = \sum_{u\in W^\perp}|\widehat{f}(u)|^2 \le p^{n-k}\max_{u\in\V_n}|\widehat{f}(u)|^2$.
This in particular implies
\begin{equation} 
\label{lemax}
|\widehat{f_a}(0)| = |\sum_{x\in W}\epsilon_p^{f(x+a)}| \le \max_{u\in\V_n}|\widehat{f}(u)|
\end{equation}
for all $a \in V$. We may apply the same arguments to the function $\tilde{f}(x) = f(x) + \langle v, x\rangle$ for some $v\in\V_n$ 
(which has the same Walsh spectrum as $f$, 
hence $\max_{u\in\V_n}|\widehat{\tilde{f}}(u)| = \max_{u\in\V_n}|\widehat{f}(u)|$). Then $(\ref{lemax})$ converts to
\begin{equation}
\label{lemax1}
|\widehat{\tilde{f}_a}(0)| = |\sum_{x\in W}\epsilon_p^{f(x+a)+\langle v,x\rangle + \langle v,a\rangle}| \le \max_{u\in\V_n}|\widehat{f}(u)|
\end{equation}
for all $a \in V$, and the claim of the lemma follows. \hfill$\Box$. \\[.5em]
With Lemma \ref{CV3} we get the relation between normality and Walsh coefficient more general for functions from $\V_n$ to $\F_p$ for arbitrary primes $p$. 
\begin{corollary}
\label{normbound}
Let $f$ be a function from $\V_n$ to $\F_p$. If $f$ is (weakly) $k$-normal, then $p^k \le \max_{u\in\V_n}|\widehat{f}(u)|$. 
\end{corollary}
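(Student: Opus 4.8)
The plan is to derive this directly from Lemma~\ref{CV3}. Observe first that $k$-normality (constancy on a $k$-dimensional affine subspace) is the special case of weak $k$-normality (affinity on such a subspace) in which the affine restriction has zero linear part. Hence it suffices to treat the weakly-$k$-normal case, and the corollary follows for both notions at once.

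Accordingly, I would begin by fixing the subspace data required by the lemma. Suppose $f$ is affine on a $k$-dimensional affine subspace $A$, and write $A = a + W$ with $W$ its $k$-dimensional direction space. I would then choose $V$ to be a linear complement of $W$ in $\V_n$, and replace the coset representative of $A$ by its component in $V$, so that $a \in V$ and $A = a + W$. This puts us exactly in the notation preceding Lemma~\ref{CV3}, with $f_a$ the restriction of $f$ to $A$, i.e. $f_a(x) = f(a+x)$ for $x \in W$.

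Next I would compute a single Walsh coefficient of $f_a$. Since $f$ is affine on $A$, the function $f_a\colon W \to \F_p$ is affine, and using nondegeneracy of the inner product its linear part can be represented as $x \mapsto \langle v_0, x\rangle$ for some $v_0 \in \V_n$; thus $f_a(x) = \langle v_0, x\rangle + d$ on $W$ for a constant $d \in \F_p$. Evaluating the Walsh transform of $f_a$ at $v_0$ then gives $\widehat{f_a}(v_0) = \sum_{x\in W}\epsilon_p^{d} = p^{k}\epsilon_p^{d}$, so that $|\widehat{f_a}(v_0)| = p^{k}$.

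Finally I would invoke the second (``moreover'') inequality of Lemma~\ref{CV3}, which yields $p^{k} = |\widehat{f_a}(v_0)| \le \max_{v\in\V_n}|\widehat{f_a}(v)| \le \max_{u\in\V_n}|\widehat{f}(u)|$, as claimed. For the purely constant ($k$-normal) case one may instead use the first inequality of Lemma~\ref{CV3}, since there $|\widehat{f_a}(0)| = p^{k}$ and this single term of the sum is already bounded by $\max_{u\in\V_n}|\widehat{f}(u)|^{2}$. I do not expect a genuine obstacle: the statement is essentially immediate from the lemma. The only points needing care are recognizing that an affine restriction concentrates all its Walsh mass, attaining $|\widehat{f_a}(\cdot)| = p^{k}$ at the single frequency $v_0$, and representing the linear part of $f_a$ through the ambient nondegenerate inner product so that $v_0$ is a legitimate argument of $\widehat{f_a}$.
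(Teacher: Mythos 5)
Your proposal is correct and takes essentially the same route as the paper: both observe that the affine restriction $f_a$ on the $k$-dimensional direction space $W$ turns the corresponding twisted exponential sum into $p^k\epsilon_p^{d}$ in modulus $p^k$, and then bound this by $\max_{u\in\V_n}|\widehat{f}(u)|$ via the second (``moreover'') part of Lemma~\ref{CV3} (the paper inlines this as its inequality~(\ref{lemax1})). Your reduction of the constant case to the affine case with zero linear part, and your remark that nondegeneracy of the inner product lets the linear part be represented by some $v_0\in\V_n$, are both sound and implicit in the paper's argument.
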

{\it Proof.}
Suppose that $f$ is weakly $k$-normal, i.e. $f(x) = \langle v, x\rangle +c$, for some $v\in\V_n$, $c\in\F_p$, and all $x\in a+W$ for some 
$k$-dimensional subspace $W$ of $\V_n$ and some $a$ in a complement $V$ of $W$. Then, using Lemma \ref{CV3} we have
\[ |\sum_{x\in W}\epsilon_p^{f(x+a)+\langle v,x\rangle + \langle v, a\rangle}| = p^k \le \max_{u\in\V_n}|\widehat{f}(u)|. \]
\hfill$\Box$\\[.5em]
For a bent function $f:\V_n\rightarrow \F_p$, Corollary \ref{normbound} implies that $f$ can be at most $\lfloor n/2 \rfloor$-normal.
Moreover, in \cite{cmp} 
it has been shown that a bent function in even dimension which is weakly regular but not regular cannot be normal. Hence, such a bent function can be
at most ($n/2-1$)-normal. However, whereas an arbitrary $p$-ary function is with high probability ``highly non-normal'' (see Proposition \ref{hnn}), 
many bent functions in odd characteristic are (weakly) $k$-normal with $k$ as large as the theory allows. This means, a $p$-ary bent function in even 
dimension $n$ seems most likely to be normal, unless it is weakly regular but not regular, in which case it would be  ($n/2-1$)-normal. 
A $p$-ary bent function in odd dimension $n$ 
appears likely to be $(n-1)/2$-normal. As in the case of Boolean bent functions, it may not be easy to find bent functions for which one 
can show a different behaviour. 
The following results on normality of $p$-ary bent functions support this point of view. Note that the large classes of completed Maiorana-McFarland 
and $PS^+$ bent functions (all of which members are regular bent functions in even dimension) are normal by their definition.
\begin{itemize}
\item[-] A quadratic bent function $Q:\V_n\rightarrow\F_p$, $p$ odd, is normal if $n$ is even and $Q$ is regular, ($n/2-1$)-normal if $n$ is even and $Q$ is weakly
regular but not regular, and $(n-1)/2$-normal if $n$ is odd, see \cite{cmp}.
\item[-] \cite[Proposition 5]{jzhl} A regular bent function of the form $f(x) = \Tr\left(\alpha x^{l(p^{n/2}-1)}\right) + \epsilon x^{(p^n-1)/2}$ is normal.
(For the bentness conditions see \cite[Theorem 1]{jzhl}.)
\item[-] \cite[Theorem 7]{cmp} The regular Coulter-Matthews bent functions are normal.
\item[-] The secondary construction of non-weakly regular bent functions $f:\V_n\rightarrow\F_p$ in \cite{agw,aw}, yields (weakly) normal bent 
functions when $n$ is even and (weakly) $(n-1)/2$-normal bent functions when $n$ is odd.
\item[-] \cite[Example 1]{cmp} $f:\F_{3^4}\rightarrow\F_3$, $f(x) = \T_4(\omega^{10}x^{22}+x^4)$, $\omega$ primitive element of $\F_{3^4}$, is normal.
\end{itemize}
The last example presented in \cite{hk1}, was one of the first known examples for a non-weakly regular bent function. As pointed out in \cite{cmp}, 
the function does not have a bent dual. One may expect that this in some sense not smooth bent function exhibits a more chaotic behaviour, which 
however does not apply with regard to normality. We here remark that differently to Boolean functions in dimension $4$ (see \cite{c04}), functions
from $\F_{3^4}$ to $\F_3$ which are not weakly normal do exist. Examples are the quadratic bent functions from $\F_{3^4}$ to $\F_3$ which are
weakly regular but not regular, and then by the result in \cite{cmp} not weakly normal.
%

Candidates for non-weakly normal bent functions may be other sporadic examples of non-weakly regular bent functions:
\begin{enumerate}
\item $g_1:\F_{3^6}\rightarrow \F_3$ with $g_1(x) = {\rm Tr}_6(\xi^7x^{98})$, where
$\xi$ is a primitive element of $\F_{3^6}$, \cite{hk},
\item $g_2:\F_{3^6}\rightarrow \F_3$ with $g_2(x) = {\rm Tr}_6(\xi^7x^{14}+\xi^{35}x^{70})$, where
$\xi$ is a primitive element of $\F_{3^6}$, \cite{hk2}.
\end{enumerate}
Recently, the first construction of non-weakly regular bent functions for which the dual is not bent was presented, see \cite{cmp1}. 
This construction may also provide candidates for non-weakly normal bent functions: \\
Let $1,\alpha,\beta \in\F_{p^n}$ be linearly independent over $\F_p$, and let $f(x) = \Tr(x^2)$, $h_1(x) = \Tr(\alpha x^2)$, $h_2(x) = \Tr(\beta x^2)$.
Then the bent function $F:\F_{2^n}\times\F_p^2\rightarrow\F_p$
\[ F(x,y_1,y_2) = f(x) + (y_1+h_1(x))(y_2+h_2(x)) \]
is in general non-weakly regular.

\section{Testing normality}
\label{test}

It is not easy to show (weak) normality for a given function, and it is even harder to disprove (weak) normality. There is no approach known, 
how to prove non-weak-normality by hand. In \cite{cddl,lg}, to show the non-weak-normality of some Boolean bent function in dimension 
$10,12,14$, a computer algorithm is used, see \cite{cddl}. In this section, based on the principles of the algorithm for Boolean 
functions in \cite{cddl}, we develop an algorithm for $p$-ary functions. 

Similarly as in \cite{cddl} for Boolean functions, the strategy is to combine cosets of a subspace $U$ of dimension $s$
on which $f$ is a fixed constant $c$ to an affine subspace of dimension $s+1$ on which $f$ is constant $c$.
Differently to the Boolean case, where the union of two cosets of a linear subspace $U$ is always an affine subspace,
the union of $p$ cosets of a subspace $U$ of $\F_p^n$ is in general not an affine subspace. Hence the algorithm in
\cite{cddl} does not transfer straightforward to $p$-ary functions.
To generate a complete list of the cosets of a subspace $U$ (without repetitions) we fix a complement $U^c$ of $U$.
A partition of $\F_p^n$ into cosets of $U$ we then get as $\{a+U\;:\;a\in U^c\}$.
\begin{lemma}
\label{Uprime}
Let $U$ be a linear subspace of $\V_n=\F_p^n$ of dimension $s<n$, let $U^c$ be a complement
of $U$ and let $a_1,a_2,\ldots,a_p$ be distinct elements of $U^c$. Then the union 
\[ \bigcup_{i=1}^q(a_i+U) \]
is an affine subspace $a_1+U^\prime$ of dimension $s+1$, if and only if $\{a_1,a_2,\ldots,a_p\}$
is an affine subspace $\{a_1+(a_2-a_1)t\,:\,0\le t\le p-1\}$ of $U^c$. Then
\[ a_1+U^\prime = a_1 + \langle a_2-a_1\rangle + U. \]
In particular, for $p=3$, $\bigcup_{i=1}^p(a_i+U)$ is an affine subspace if and only if $a_1+a_2+a_3 = 0$. 
\end{lemma}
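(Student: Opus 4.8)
The plan is to prove both implications, with a cardinality count doing the bookkeeping and a projection of $\F_p^n$ onto $U^c$ along $U$ carrying the geometric content. First I would record that, since $a_1,\ldots,a_p$ are distinct elements of the complement $U^c$, the cosets $a_i+U$ are pairwise disjoint: if $a_i-a_j\in U$ then $a_i-a_j\in U\cap U^c=\{0\}$. Hence the union $A=\bigcup_{i=1}^p(a_i+U)$ has exactly $p\cdot p^s=p^{s+1}$ elements, and whenever $A$ turns out to be an affine subspace it is automatically $(s+1)$-dimensional, so the dimension assertion needs no separate treatment.

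For the implication that collinearity yields an affine subspace, I would set $d=a_2-a_1$ and assume $\{a_1,\ldots,a_p\}=\{a_1+td:0\le t\le p-1\}$. Since $0\ne d\in U^c$ we have $\langle d\rangle\cap U=\{0\}$, so $\langle d\rangle+U$ is linear of dimension $s+1$, and $a_1+\langle d\rangle+U=\bigcup_{t=0}^{p-1}(a_1+td+U)=A$. This exhibits $A$ as the affine subspace $a_1+\langle a_2-a_1\rangle+U$ and simultaneously pins down $U'$, giving the displayed formula.

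The substantive direction assumes $A=a_1+U'$ with $U'$ linear. From $a_1+U\subseteq A$ I would deduce $U\subseteq U'$, and the count above forces $\dim U'=s+1$; thus $U$ is a hyperplane of $U'$ and $A$ decomposes into exactly $p$ cosets of $U$. The $p$ distinct cosets $a_i+U$ all lie in $A$, so they must be precisely these. Writing $U'=U\oplus\langle w\rangle$, the cosets of $U$ in $A$ are $a_1+tw+U$ for $0\le t\le p-1$, so each $a_i$ lies in $a_1+t_iw+U$ for a distinct residue $t_i$. Applying the projection $\pi$ onto $U^c$ along $U$ and using $\pi(a_i)=a_i$ then gives $a_i=a_1+t_i\pi(w)$; with $d=\pi(w)$ the set $\{a_1,\ldots,a_p\}$ becomes $\{a_1+td:0\le t\le p-1\}$. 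Distinctness forces $d\ne0$, and $a_2-a_1=t_2d$ is a nonzero multiple of $d$, so $\langle a_2-a_1\rangle=\langle d\rangle$ and this is exactly the asserted affine line. The main obstacle is this step---converting ``$A$ is an affine subspace'' into collinearity of the representatives---and the idea that unlocks it is to pass to the one-dimensional quotient $U'/U$ (equivalently, to project along $U$), which identifies the $p$ cosets with the $p$ points of an affine line over $\F_p$ and transports that line back to the $a_i$.

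Finally, for the case $p=3$ I would observe that three distinct points $a_1,a_2,a_3$ lie on an affine line iff $a_3=a_1+2(a_2-a_1)=2a_2-a_1$; since $-1\equiv2\bmod3$ this is $a_3=-a_1-a_2$, i.e.\ $a_1+a_2+a_3=0$. Together with the general equivalence this yields the stated criterion.
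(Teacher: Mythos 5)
Your proof is correct and takes essentially the same route as the paper's: the forward direction is identical, and your converse---projecting along $U$ onto $U^c$ to place the representatives on the line $a_1+\langle d\rangle$---is the same argument the paper runs more directly by choosing $a_0=a_2-a_1\in U'\setminus U$, writing each $a_i-a_1=u+ta_0$, and concluding $u\in U\cap U^c=\{0\}$. Your explicit cardinality count $|A|=p^{s+1}$, which makes $\dim U'=s+1$ automatic and pins down the coset decomposition, is a small bookkeeping addition the paper leaves implicit.
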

\begin{proof}
First assume that $\{a_1,a_2,\ldots,a_p\}$ is an affine subspace which w.l.o.g.
we can write as $a_1 + \langle a_0\rangle$ with $a_0=a_2-a_1$. Then
\[
  \bigcup_{i=1}^p (a_i+U) = \bigcup_{t=0}^{p-1} (a_1+t(a_2-a_1)+U) = a_1 +
                            \bigcup_{i=0}^{p-1} (t(a_2-a_1)+U) = a_1+\langle a_0\rangle + U. \]
Since $0\ne a_0 = a_2-a_1 \in U^c$ implies $a_0\not\in U$, the dimension of 
$U^\prime := \langle a_0\rangle + U$ is $s+1$. \\
Conversely, let the union $\bigcup_{i=1}^p(a_i+U) = a_1+U^\prime$ be an affine subspace for some
pairwise distinct $a_1,\ldots, a_p\in U^c$. Again we have $a_0=a_2-a_1\not\in U$, but $a_2-a_1\in U^\prime$.
Hence we can write $U^\prime$ as $\langle a_0\rangle + U$. For $1 < s\le p$ we can write the element $a_s-a_1$
of $U^\prime$ as $a_s-a_1 = u+ta_0$ for some $t\in\F_p$ and $u\in U$. Hence $\gamma = a_s-a_1-ta_0 = u \in U$.
Since $\gamma \in U^c$ we must have $\gamma = u = 0$, and hence $a_s = a_1 + ta_0$.
\end{proof}
\begin{lemma}
\label{aoU}
Let $f$ be a function from $\V_n$ to $\F_p$ and $A=a_1+U'$ be an affine subspace of dimension $s+1\le n$ of
$\V_n$. Then the restriction of $f$ to $A$ is affine but nonconstant if and only if $U^\prime = \langle a_0 \rangle + U$
such that $f$ is constant on each coset $(a_1+ta_0)+U$ of $U$, and affine (but nonconstant) on $a_1+\langle a_0\rangle$.
\end{lemma}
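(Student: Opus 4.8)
The plan is to translate the geometric conditions on $A$ into the statement that $f|_A$ is an affine function of a single suitably chosen coordinate. Recall that $f|_A$ being affine means $f(a_1+w)=\ell(w)+c$ for some linear functional $\ell$ on $U'$ and some $c\in\F_p$, and that $f|_A$ is nonconstant precisely when $\ell\neq 0$. Both implications are then obtained by passing between such an $\ell$ and a codimension-one decomposition $U'=\langle a_0\rangle+U$, with $U$ playing the role of $\ker\ell$.

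For the forward direction I would suppose $f|_A$ is affine and nonconstant, so $f(a_1+w)=\ell(w)+c$ with $\ell\neq 0$, and set $U:=\ker\ell$. Since $\ell\neq 0$, this is a hyperplane of $U'$, hence $\dim U=s$, and any $a_0\in U'\setminus U$ satisfies $\ell(a_0)\neq 0$ and yields the direct decomposition $U'=\langle a_0\rangle+U$ of dimension $s+1$. Writing an arbitrary element of $U'$ uniquely as $ta_0+u$ with $t\in\F_p$ and $u\in U$, one computes $f(a_1+ta_0+u)=\ell(ta_0+u)+c=t\,\ell(a_0)+c$, which is independent of $u$, so $f$ is constant on each coset $(a_1+ta_0)+U$; and $t\mapsto t\,\ell(a_0)+c$ is affine in $t$ with nonzero slope $\ell(a_0)$, so $f$ is affine but nonconstant on the line $a_1+\langle a_0\rangle$.

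For the converse I would assume the decomposition $U'=\langle a_0\rangle+U$ together with the two stated conditions and reconstruct the linear part. Every point of $A$ is of the form $a_1+ta_0+u$ with $t\in\F_p$ and $u\in U$ uniquely determined; constancy of $f$ on the coset $(a_1+ta_0)+U$ gives $f(a_1+ta_0+u)=f(a_1+ta_0)$, and affineness of $f$ on the line gives $f(a_1+ta_0)=\alpha t+c$ for some $\alpha\neq 0$ and $c\in\F_p$. Letting $\pi\colon U'\to\F_p$ be the linear projection onto the $a_0$-coordinate associated with the direct sum, this reads $f(a_1+w)=\alpha\,\pi(w)+c$, an affine function of $w\in U'$ with nonzero linear part, so $f|_A$ is affine and nonconstant.

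The individual computations are routine; the one point that deserves care is the reassembly in the converse, where one must verify that ``independent of the $U$-component'' together with ``affine in the $a_0$-component'' genuinely combine into a function that is affine in the full coordinate $w=ta_0+u$, rather than merely affine separately in $t$ and in $u$. This is exactly what the linearity of $\pi$ supplies, since $w\mapsto\alpha\,\pi(w)+c$ is affine on $U'$. In parallel, Lemma \ref{Uprime} guarantees that the union of the cosets $(a_1+ta_0)+U$ over $t\in\F_p$ is indeed the affine subspace $a_1+U'$ of dimension $s+1$, so the two descriptions refer to the same object.
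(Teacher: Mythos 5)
Your proposal is correct and follows essentially the same route as the paper's proof: in the forward direction both take $U$ to be the kernel of the linear part $\ell$ (the paper's $L$) and decompose $U'=\langle a_0\rangle+U$, and in the converse both reassemble the affine function via the value $t\mapsto \alpha t+c$ on the line, with your explicit projection $\pi$ merely spelling out the linear function $L$ (vanishing on $U$, with $L(a_0)=\alpha$) that the paper leaves implicit. Your added care about why ``constant in the $U$-component, affine in $t$'' yields a genuinely affine function on all of $U'$ is a welcome clarification but not a different argument.
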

\begin{remark}
The function $f$ is then constant on the cosets $a_1+ta_0 + U$ of $U$, $0\le t\le p-1$, with pairwise distinct
constants for pairwise distinct $0\le t_1,t_2\le p-1$.
For the special case that $p=3$, the condition in Lemma \ref{aoU} simplifies: 
The function $f$ is affine (but not constant) on $a+U$ if and only if $a+U$ is the union of three affine subspaces $a_1+U^\prime$, 
$a_2+U^\prime$, $a_3+U^\prime$ for a subspace $U^\prime$ of $\F_3^m$ of dimension $s-1$, such that $f_{|(a_1+U^\prime)}=c$,  
$f_{|(a_2+U^\prime)}=c+1$ and $f_{|(a_3+U^\prime)}=c+2$.
\end{remark}
{\it Proof of the Lemma.} Let $f$ be affine on $A$, i.e. there exists a linear function $L$ such that
$f(a_1+u') = L(u')+f(a_1)$ for $u'\in U^\prime$. 
Since we suppose that $f$ is not constant on $A$, the linear function
$L$ is not the zero-function on $U^\prime$, hence has an $s$-dimensional kernel $U$ in $U^\prime$. We can write
$U^\prime$ as $U^\prime = \langle a_0\rangle + U$ for some $a_0\in U^\prime\setminus U$, and observe that for
all $t\in\F_p$ and $u\in U$, 
\[ f(a_1+ta_0+u) = L(ta_0+u) + f(a_1) = tL(a_0)+L(u)+f(a_1) = tL(a_0)+f(a_1). \]
In particular, $f$ is affine on $a_1+\langle a_0\rangle$, and constant $tL(a_0)+f(a_1)$ on $a_1+ta_0+U$ for
every fixed $t$. \\
Conversely let $A=a_1+\langle a_0\rangle + U$, and suppose that $f$ is constant on $(a_1+ta_0)+U$ for every
fixed $0\le t\le p-1$, and affine on $a_1+\langle a_0\rangle$. Then for some linear function $L$ we have
\[ f(a_1+ta_0+u) = f(a_1+ta_0) = tL(a_0)+f(a_1), \]
hence affine on $a_1+U^\prime$. (Note that $U$ is in the kernel of $L$).
\hfill$\Box$

Lemma \ref{Uprime} and Lemma \ref{aoU} suggest the following procedure to construct an affine 
subspace of dimension $s+1$ on which $f:\F_p^n\rightarrow\F_p$ 
is constant, from such affine subspaces of dimension $s$. For a linear subspace $U$ of dimension $s$ fix a complement $U^c$ and find
$a_1,\dots,a_p\in U^c$ such that $f$ is constant with the same $c$ on all affine subspaces 
$a_1+U,\dots,a_p+U$. 
If $\{a_1,\dots a_p\}$ form a one-dimensional affine subspace, then take the union of those cosets. Note that this union then equals 
$a_1+U^\prime$ with $U^\prime = \langle U,a_2-a_1\rangle$. (In the following we use the term $1$-flat for a
one-dimensional affine subspace.)\\[.7em]
{\bf Basic Algorithm}:  \\[.5em]
{\bf Input:} a function $f:\F_p^n\rightarrow\F_p$, a starting dimension $s$ \\[.3em]
{\bf Output:} a list of all affine subspaces if dimension $m$ on which $f$ is affine \\[.3em]
{\bf For all} subspaces $U$ of $\F_p^n$ with $\dim(U) = s$ {\bf do} \\[.3em]
\hspace*{.5em} {\bf Fix} $\mathbf{\it U^c}$, a complement of $U$ in $\F_p^n$ \\[.3em]
\hspace*{.5em} {\bf Collect} all affine subspaces $a+U$, $a\in U^c$, with $f|_{a+U}=c$,
 for all $c\in\F_{p}$ \\[.3em]
\hspace*{.5em} {\bf Combine} tuples $(a_1+U,\dots,a_p+U)$ with 
$f|_{a_1+U}=\dots=f|_{a_p+U}=c$, for $c = 0,...,p-1$ resp., where the coset leaders form a $1$-flat
to get affine subspaces 
$a_1+U^\prime = a_1 + \langle U,a_2-a_1\rangle$ of dimension $s+1$ such that
$f|_{a_1+U^\prime}=c$, with $c=0,\dots,p-1$ resp. \\[.3em]
{\bf Repeat} these steps for the obtained subspaces $U^\prime$ up to dimension $m-1$ \\[.3em]
{\bf Combine} tuples $(a_1+U,\dots,a_p+U)$ where the $a_{i}$ form a $1$-flat and
with $f|_{a_i+U}=i$, $i = 0,\dots,p-1$ resp., or with
$f|_{a_1+U}=c, \dots, f|_{a_p+U}=c,$ with $c=0,\dots,p-1$
to get affine subspaces $a_1+U^\prime = a_1 + \langle U,a_2-a_1\rangle$ of dimension $m$ on which $f$ is affine \\[.3em]
{\bf Output} these affine subspaces of dimension $m$ \\[1.em]

In the case of $p=3$, we can employ some simplifications. We give this
algorithm in some more detail as well. \\[.7em]

{\bf Specific Algorithm for p=3:} (following Algorithm 1 in \cite{cddl}) \\[.5em]
{\bf Input:} a function $f:\F_3^n\rightarrow\F_3$, a starting dimension $s$ \\[.3em]
{\bf Output:} a list of all affine subspaces if dimension $m$ on which $f$ is affine \\[.3em]
{\bf For all} subspaces $U$ of $\F_3^n$ with $\dim(U) = s$ {\bf do} \\[.3em]
\hspace*{.5em} {\bf Fix} $\mathbf{\it U^c}$, a complement of $U$ in $\F_3^n$ \\[.3em]
\hspace*{.5em} {\bf Determine} all affine subspaces $a+U$, $a\in U^c$, with $f|_{a+U}=0$ and $f|_{a+U}=1$ and $f|_{a+U}=2$ resp. \\[.3em]
\hspace*{.5em} {\bf Combine} triples $(a_1+U,a_2+U,(2a_1+2a_2)+U)$ with $f|_{a_1+U}=f|_{a_2+U}=f|_{(2a_1+2a_2)+U}=c$, $c = 0,1,2$ resp.
to get affine subspaces $a_1+U^\prime = a_1 + \langle U,a_2-a_1\rangle$ of dimension $s+1$ such that
$f|_{a_1+U^\prime}=c$, $c=0,1,2$ resp. \\[.3em]
{\bf Repeat} these steps for the obtained subspaces $U^\prime$ up to dimension $m-1$ \\[.3em]
{\bf Combine} triples $(a_1+U,a_2+U,(2a_1+2a_2)+U)$ with $f|_{a_1+U}=f|_{a_2+U}=f|_{(2a_1+2a_2)+U}=c$, $c = 0,1,2$ resp., or with
$f|_{a_1+U}=c, f|_{a_2+U}=c+1, f|_{(2a_1+2a_2)+U}=c+2$
to get affine subspaces $a_1+U^\prime = a_1 + \langle U,a_2-a_1\rangle$ of dimension $m$ on which $f$ is affine \\[.3em]
{\bf Output} these affine subspaces of dimension $m$ \\[1.em]
We applied our algorithm to several known bent functions, and observed that many of them are in fact weakly $k$-normal with
$k$ as large as the theory allows. But we also found examples with a different behaviour. We collect some of the experimental 
results, which we find interesting in the following. For the first two examples we chose bent functions which have
maximal possible normality. The other functions we present below do not meet the upper bound on $k$-normality.
\begin{itemize}
\item[I] The weakly regular and non-regular Coulter-Matthews bent function\\ $\T_6(\xi^3 x^{{\lfloor(3^{7}+1)/2)\rfloor}})$ from $\F_{3^6}$ to $\F_3$,
where $\xi$ is a primitive element of $\F_{3^6}$, is $2$-normal.
\item[II] The regular bent function in dimension $4$,
 $\T_4(\xi^{138}{x^{24}}+\xi^{184}{x^{336}})$, 
 from $\F_{5^4}$ to $\F_5$,where $\xi$ is a primitive element of $\F_{5^4}$, is $2$-normal
 (Ex.7.1 in \cite{LiHellTaKho13}).
\item[III] The weakly regular Coulter-Matthews bent function in odd dimension $7$,\\ $\T_7(\xi^6 x^{\lfloor(3^{9}+1)/2)\rfloor})$, where $\xi$ is a primitive element
of $\F_{3^7}$, is $2$-normal but not (weakly) $3$-normal.
\item[IV] The weakly regular Coulter-Matthews bent function in odd dimension $9$,\\ $\T_9(\xi^5 x^{\lfloor(3^{11}+1)/2)\rfloor})$, where $\xi$ is a primitive element of $\F_{3^9}$, is $3$-normal but not (weakly) $4$-normal.
\item[V] The non-weakly regular bent function 
$g_1:\F_{3^6}\rightarrow \F_3$ with $g_1(x) = {\rm Tr}_6(\xi^7x^{98})$
where $\xi$ is a primitive element of $\F_{3^6}$, are not normal. 
\item[VI] The non-weakly regular bent function
$g_2:\F_{3^6}\rightarrow \F_3$ with $g_2(x) = {\rm Tr}_6(\xi^7x^{14}+\xi^{35}x^{70})$, 
where $\xi$ is a primitive element of $\F_{3^6}$, are not normal.
\item[VII] The non-weakly regular bent function $F:\F_{3^4}\times\F_p^2\rightarrow \F_3$ with
$F(x,y_1,y_2) = {\rm Tr}_4(x^2) + (y_1+{\rm Tr}_4(\xi^73x^2) )(y_2+{\rm Tr}_4(\xi^76x^2) )$,
where $\xi$ is a primitive element of $\F_{3^4}$, is not normal.
\end{itemize}

Examples III and IV are both bent functions in odd dimension, which are not (weakly) $(n-1)/2$-normal.
As our experimental results indicate, being solely $((n-1)/2-1)$-normal seems to be the typical behaviour
of Coulter-Matthews bent functions in odd dimension.
To the best of our knowledge, the last three examples are the first (non-binary) examples of bent functions in even dimension 
(not in the class of weakly regular but not regular bent functions) which are shown to be not (weakly) normal. 
All functions in V,VI,VII are non-weakly regular bent functions for which the dual is not bent, see \cite{cmp0,cmp1}.


Once a bent function in dimension $n$ is known which is not (weakly) $k$-normal for some $k$, we can construct bent functions
in any dimension $N = n+2s$, $s\ge 1$, that is not (weakly) ($k+s$)-normal, applying the following lemma which is a generalization
of Lemma 25 in \cite{cddl} for Boolean functions in dimension $n$ and $k=n/2$. In particular we can construct not weakly normal
(not weakly $(N-1)/2$, $N/2-1$-normal) bent functions in dimension $N$ from such bent functions in dimension $n$. 
\begin{lemma}
\label{25}
For a $p$-ary function $f:\F_p^n\rightarrow\F_p$ the following properties are equivalent.
\begin{itemize}
\item[(1)] $f$ is (weakly) $k$-normal,
\item[(2)] $g:\F_p^n\times\F_p^2\rightarrow\F_p$ given by $g(x,y,z) = f(x) + yz$ is (weakly) $(k+1)$-normal.
\end{itemize}
In particular, $f$ is (weakly) normal if and only if $g$ is weakly normal ($n$ even).
\end{lemma}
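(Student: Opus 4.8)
The plan is to prove the two implications in turn, in each running the constant (normal) and affine (weakly normal) cases in parallel; the concluding ``in particular'' claim is then simply the case $k=n/2$, where for $g\colon\F_p^{n+2}\to\F_p$ the value $k+1=n/2+1$ is half of the dimension $n+2$.

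The implication $(1)\Rightarrow(2)$ is immediate: if $f$ is constant (resp.\ affine) on a $k$-dimensional affine subspace $A\subseteq\F_p^n$, then, since $g(x,y,0)=f(x)$, the function $g$ is constant (resp.\ affine) on the $(k+1)$-dimensional subspace $A\times\F_p\times\{0\}$.

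For $(2)\Rightarrow(1)$ I would write the witnessing affine subspace as $B=w+W$ with $w=(w_0,w_1,w_2)$ and $\dim W=k+1$, introduce the coordinate projections $\pi_0\colon\F_p^n\times\F_p^2\to\F_p^n$ and $\pi_{12}\colon\F_p^n\times\F_p^2\to\F_p^2$, and set $K=W\cap\ker\pi_{12}$, which $\pi_0$ maps injectively onto $K_0:=\pi_0(K)$ of dimension $k+1-\dim\pi_{12}(W)$. The guiding idea is that wherever $y,z$ range over a set on which the product $yz$ is constant, $g$ reduces to $f$ up to an additive constant, so constancy or affineness of $g|_B$ descends to $f$. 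If $\dim\pi_{12}(W)\le1$ then $\dim K\ge k$; every $u\in K$ has $u_1=u_2=0$, so $g(w+u)=f(w_0+u_0)+w_1w_2$, and $f$ inherits constancy (resp.\ affineness) on $w_0+K_0$, a subspace of dimension $\ge k$ which I cut down to dimension $k$.

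The real obstacle is the case $\dim\pi_{12}(W)=2$, where $K$ alone supplies only a $(k-1)$-dimensional subspace and the quadratic term $yz$ varies over all of $\F_p^2$. Here I would choose lifts $e_1=(p_1,1,0)$ and $e_2=(p_2,0,1)$ in $W$ of the standard basis of $\F_p^2$, so that $W=K\oplus\langle e_1,e_2\rangle$ and the classes of $p_1,p_2$ modulo $K_0$ are well defined. The crucial point is that $p_1,p_2$ cannot both lie in $K_0$: if they did, then $(0,1,0)$ and $(0,0,1)$ would belong to $W$, forcing $B$ to contain the $2$-flat $\{w_0\}\times\F_p^2$ on which $g(w_0,y,z)=f(w_0)+yz$ is a nondegenerate (hyperbolic) quadratic, hence neither constant nor affine --- contradicting the choice of $B$. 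So, say $p_1\notin K_0$. Since $\pi_{12}(W)=\F_p^2$, the slice $B\cap\{z=0\}$ is nonempty and $k$-dimensional, on it $g=f\circ\pi_0$, and the condition $p_1\notin K_0$ makes $\pi_0$ restrict to an affine isomorphism of this slice onto a $k$-dimensional affine subspace of $\F_p^n$; constancy (resp.\ affineness) of $g$ there yields the same for $f$ (if instead $p_2\notin K_0$ one uses $B\cap\{y=0\}$ symmetrically). This produces the required $k$-dimensional witness for $f$ and completes the proof.
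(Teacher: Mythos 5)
Your proof is correct, and while the forward implication and the overall slice-and-project skeleton of the converse match the paper, your key mechanism is genuinely different. The paper writes $g$ on the witnessing flat $E'=w+U'$ in affine form $\langle\gamma,x\rangle+\alpha y+\beta z+c$, fibers $E'$ over all values $(y,z)=(a,b)$ into cosets $E_{ab}$ of a common subspace $U$ (your $K_0$), forces $\dim U=k-1$ by a counting argument when $\dim U<k$ (your case $\dim\pi_{12}(W)=2$; your case $\dim\pi_{12}(W)\le 1$ is its case $\dim U\ge k$), and then glues the $p$ fibers along the line $b=\alpha$ --- chosen precisely so that the fiberwise constants $\alpha a+\beta b+c-ab$ coincide --- into a $k$-flat on which $f-\langle\gamma,\cdot\rangle$ is constant. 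You instead slice at $z=0$ (or $y=0$), where the quadratic term vanishes identically, so that $g$ restricts to $f\circ\pi_0$ with no reference to the coefficients $\alpha,\beta$, and you guarantee that the projection of the slice is a $k$-dimensional affine isomorphism via your $2$-flat observation: if both $(0,1,0)$ and $(0,0,1)$ lay in $W$, then $B$ would contain $\{w_0\}\times\F_p^2$, on which $g=f(w_0)+yz$ is neither constant nor affine. That observation is a genuine merit of your route: the paper's assertion that ``all $E_{ab}$ are distinct'' does not actually follow from the counting alone (if, say, $(0,1,0)\in U'$, whole rows of fibers coincide as subsets of $\F_p^n$); excluding this requires exactly the kind of nondegeneracy argument you make explicit, so your version closes a step the paper glosses over. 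What the paper's choice $z=\alpha$ buys in exchange is an immediate identification of the constant value $\beta\alpha+c$ of $f-\langle\gamma,\cdot\rangle$ on the glued flat, handling the constant and affine cases uniformly in one computation, whereas you transport constancy resp.\ affineness through the inverse of the projection isomorphism.
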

{\it Proof.}
First suppose that $f$ is (weakly) $k$-normal, and $E$ is a $k$-dimensional affine subspace restricted to
which $f$ is constant (affine). Then $g$ is constant (affine) on the $(k+1)$-dimensional affine subspace 
$E^\prime = \{(x,y,0) \,:\,x\in E, y\in\F_p\}$ of $\F_p^n\times\F_p^2$.

Conversely suppose that $g$ is weakly $(k+1)$-normal, and let $E^\prime = w + U^\prime$, $w=(w_1,w_2,w_3)$, be a
$(k+1)$-dimensional affine subspace of $\F_p^n\times\F_p^2$ restricted to which $g$ is 
constant or affine. Then for $(x,y,z) \in E^\prime$ we have
\begin{equation}
\label{onE}
g(x,y,z) = \langle\gamma,x\rangle + \alpha y + \beta z + c
\end{equation}
for some $\gamma \in\F_p^n$, $\alpha,\beta,c\in\F_p$. For $a,b\in\F_p$ define
\begin{equation}
\label{Eab}
E_{ab} = \{x\in\F_p^n\,:\,(x,a,b) \in E^\prime\}.
\end{equation}
If $\bar{x}\in E_{ab}$, then $E_{ab} = \bar{x} + U$, where $U$ is the subspace of $\F_p^n$ given by
$U = \{x\in\F_p^n\,:\, (x,0,0) \in U^\prime\}$ (straightforward). Observe that restricted to $E_{ab}$,
the function 
\begin{equation}
\label{fonE}
f(x)-\langle\gamma,x\rangle = \alpha a + \beta b + c - ab
\end{equation}
is constant. If $U$ has dimension $k$ we are done. Suppose that $\dim(U) < k$. Since $E^\prime$
is the union $\bigcup_{a,b}\{(x,a,b)\,:\,x\in E_{ab}\}$ (some $E_{ab}$ may be the same, some the empty set),
we have $p^{k+1} = |E^\prime| \le \sum_{a,b}|E_{a,b}|$. As we assume that $\dim(U) < k$, this implies that
$\dim(U) = k-1$, i.e. $|E_{ab}| = p^{k-1}$ for all $(a,b)\in\F_p^2$ and all $E_{ab}$ are distinct. We then 
define $E$ as the disjoint union
\[ E = \bigcup_{a\in\F_p}E_{a\alpha} = \bar{x} + \bar{U} \]
for an element $\bar{x}\in E$, where $\bar{U} = \{(x\in\F_p^n\,:\,(x,a,0)\in E^\prime\;\mbox{for some}\;a \in \F_p\}$,
and observe that $f(x)-\langle\gamma,x\rangle = \beta\alpha + c$ is constant on this $k$-dimensional affine
subspace. \hfill$\Box$\\[.5em]

\section{Perspectives}

In this article we contribute to the analysis of $k$-normality for $p$-ary bent functions.
Depending on the regularity of a bent function $f$ from $\V_n$ to $\F_p$ and the parity of $n$,
many bent functions seem to be (weakly) normal, $(n/2-1)$-normal or $(n-1)/2$-normal, which is
drastically different from the average behaviour of a $p$-ary function.
It seems not easy to find bent functions for which one can show a different behaviour. This
resembles the situation for Boolean bent functions. We develop an algorithm for testing normality
for $p$-ary functions. Applying this algorithm we verify that some ternary non-weakly regular bent 
functions in even dimension $n$ are not weakly normal. For odd dimension $n$ we found examples
in the class of Coulter-Matthews bent functions which are not weakly $(n-1)/2$-normal.
With Lemma \ref{25} we then can construct from such functions in dimension $n$, bent functions 
with the same property in any dimension $n+2s$, $s\ge 1$.

There are many interesting open questions on normality for $p$-ary bent functions. We close with 
a collection of some of them, which can now be attacked using our presented algorithm.
\begin{itemize}
\item[-] Find regular $p$-ary bent functions in even dimension which are not normal. 
\item[-] Find weakly regular but not regular $p$-ary bent functions in even dimension which are not 
$(n/2-1)$-normal.
\item[-] Show that the weakly regular but not regular Coulter-Matthews bent functions 
in even dimension are $(n/2-1)$-normal or find counter-examples. 
\end{itemize}
The question on the average behaviour of Boolean and $p$-ary bent functions with respect to normality
seems not easy to be answered. Are (most) bent functions affine on affine subspaces of large dimension,
or do they behave like arbitrary Boolean and $p$-ary functions, normal, $(n/2-1)$-normal,
$((n-1)/2)$-normal bent functions are only easier to find?

\end{document}